\documentclass[11pt]{article}
\usepackage[margin=1.3in]{geometry} 
\usepackage[dvipsnames]{xcolor}
\usepackage{amssymb,amsfonts,amsmath,bbm,mathrsfs,stmaryrd, mathtools,comment}
\usepackage{xcolor}
\usepackage{url}
\usepackage{dsfont}
\usepackage{enumerate}
\usepackage{enumitem}
\usepackage{tikz-cd}
\usepackage{multicol}
\usetikzlibrary{cd}
\usepackage{algorithm2e}

\usepackage[colorlinks,
             linkcolor=black!75!red,
             citecolor=blue,
             pdftitle={},
             pdfauthor={},
             pdfproducer={pdfLaTeX},
             pdfpagemode=None,
             bookmarksopen=true
             bookmarksnumbered=true]{hyperref}

\usepackage{tikz}
\usetikzlibrary{cd,arrows,calc,decorations.pathreplacing,decorations.markings,intersections,shapes.geometric,through,fit,shapes.symbols,positioning,decorations.pathmorphing}

\usepackage{braket}

\usepackage[amsmath,thmmarks,hyperref]{ntheorem}
\usepackage{cleveref}

\creflabelformat{enumi}{#2(#1)#3}

\crefname{section}{Section}{Sections}
\crefformat{section}{#2Section~#1#3} 
\Crefformat{section}{#2Section~#1#3} 

\crefname{subsection}{\S}{\S\S}
\crefformat{subsection}{#2\S#1#3} 
\Crefformat{subsection}{#2\S#1#3} 

\theoremstyle{plain}

\newtheorem{lemma}{Lemma}[section]
\newtheorem{proposition}[lemma]{Proposition}
\newtheorem{corollary}[lemma]{Corollary}
\newtheorem{theorem}[lemma]{Theorem}

\theoremstyle{nonumberplain}

\theoremstyle{plain}
\theorembodyfont{\upshape}
\theoremsymbol{\ensuremath{\blacklozenge}}

\newtheorem{definition}[lemma]{Definition}
\newtheorem{example}[lemma]{Example}

\crefname{definition}{definition}{definitions}
\crefformat{definition}{#2definition~#1#3} 

\crefformat{definition}{#2Definition~#1#3} 

\crefname{ex}{example}{examples}
\crefformat{example}{#2example~#1#3} 
\Crefformat{example}{#2Example~#1#3} 

\crefname{remark}{remark}{remarks}
\crefformat{remark}{#2remark~#1#3} 
\Crefformat{remark}{#2Remark~#1#3} 

\crefname{convention}{convention}{conventions}
\crefformat{convention}{#2convention~#1#3} 
\Crefformat{convention}{#2Convention~#1#3} 

\crefname{exercise}{exercise}{exercises}
\crefformat{exercise}{#2exercise~#1#3} 
\Crefformat{exercise}{#2Exercise~#1#3} 

\crefname{problem}{problem}{problems}
\crefformat{problem}{#2problem~#1#3} 
\Crefformat{problem}{#2Problem~#1#3} 

\crefname{lemma}{lemma}{lemmas}
\crefformat{lemma}{#2lemma~#1#3} 
\Crefformat{lemma}{#2Lemma~#1#3} 

\crefname{proposition}{proposition}{propositions}
\crefformat{proposition}{#2proposition~#1#3} 
\Crefformat{proposition}{#2Proposition~#1#3} 

\crefname{corollary}{corollary}{corollaries}
\crefformat{corollary}{#2corollary~#1#3} 
\Crefformat{corollary}{#2Corollary~#1#3} 

\crefname{theorem}{theorem}{theorems}
\crefformat{theorem}{#2theorem~#1#3} 
\Crefformat{theorem}{#2Theorem~#1#3} 

\crefname{assumption}{assumption}{Assumptions}
\crefformat{assumption}{#2assumption~#1#3} 
\Crefformat{assumption}{#2Assumption~#1#3} 

\crefformat{equation}{(#2#1#3)} 
\Crefformat{equation}{(#2#1#3)}

\theoremstyle{nonumberplain}
\theoremsymbol{\ensuremath{\blacksquare}}

\newtheorem{proof}{Proof.}

\DeclareMathOperator{\id}{id}

\newcommand\bC{{\mathbb C}}

\newcommand\bN{{\mathbb N}}

\newcommand\cC{{\mathcal C}}

\newcommand\cG{{\mathcal G}}

\newcommand\cO{{\mathcal O}}

\newcommand{\Tr}{\text{Tr}} 
\newcommand{\tr}{\text{tr}} 

\newcommand{\lp}{\left(}
\newcommand{\rp}{\right)}

\newcommand{\spn}[1]{\text{Span}\lp #1\rp} 

\begin{document}


\title{Local quantum Cuntz--Krieger algebras of dephasing quantum graphs}
\author{Lara Ismert\footnote{Department of Mathematics \& Statistics, University of New Mexico \hfill \url{ismert@unm.edu}}}
\date{\today}

\maketitle

\begin{abstract}
We establish a general structure theorem for local quantum Cuntz--Krieger families associated to finite quantum graphs whose relations are analogous to those of the partial isometries generating an ordinary graph $C^*$-algebra, with the quantum edge correspondence governing composition. We apply this structure theorem to the quantum dephasing graph on $M_n$ to obtain an explicit isomorphism between $\cO_n$ and the local quantum Cuntz--Krieger algebra. More generally, we obtain a unital embedding of $\cO_n$ in the local quantum Cuntz--Krieger algebra for the dephasing graph on a finite direct sum of $M_n$.  

\end{abstract}




\section{Introduction}

A graph $C^*$-algebra encodes the combinatorics and dynamics of a directed graph in its generators and relations (\cite{Kumjian04},\cite{BatesPaskRaeburnSzymanski2000}). 
Finite simple graphs, in particular, are in bijective correspondence with relations on finite sets. The notion of a {\em quantum relation} was formally introduced by Weaver in \cite{Weaver10} as a generalization of a classical relation. In \cite{DuanSeveriniWinter13}, Duan, Severini, and Winter introduced the notion of a {\em quantum confusability graph} for a quantum channel, which was subsequently shown by Weaver in \cite{Weaver21} to fit within the framework of quantum relations. The correspondence between classical relations on finite sets, finite simple graphs, and adjacency matrices led Brannan, Eifler, Voigt, and Weber to define a generalized notion of a quantum adjacency matrix in \cite{BEVW22}, providing yet another notion of a quantum graph. The benefit of viewing a finite simple graph as an adjacency matrix is two-fold: the quantum adjacency matrix is sensitive to the underlying quantum graph being directed, and, with an analogue of a $\{0,1\}$-matrix in hand, the authors could define an analogue of the Cuntz--Krieger algebra (\cite{CK80}) for a finite $\{0,1\}$-matrix, called a {\em quantum Cuntz--Krieger algebra}.

In \cite{BHINW23}, Brannan, Hamidi, Nelson, Wasilewski, and the present author introduced a unit condition to the original definition of a quantum Cuntz--Krieger algebra for a quantum graph $\cG$, and also defined and studied a quotient of a quantum Cuntz--Krieger algebra by so-called ``local relations.'' This quotient, called the {\em local quantum Cuntz--Krieger algebra for $\cG$}, was shown to be isomorphic to a Cuntz--Pimsner algebra for an associated $C^*$-correspondence, called the {\em quantum edge correspondence}, whenever $\cG$ has no quantum sources. Leveraging this identification, \cite[Section 4]{BHINW23} contains the only examples to date of the isomorphism class of a local quantum Cuntz--Krieger algebra. Moreover, the examples contained therein are extremal examples---rank-one quantum graphs (e.g., the trivial quantum graph) reflecting the smallest level of connectivity between vertices, and complete quantum graphs reflecting the highest level of connectivity between vertices. Only in the case of the complete quantum graph was the explicit isomorphism between the local quantum Cuntz--Krieger algebra given, and in this case, the local quantum Cuntz--Krieger algebra for the complete quantum graph on a finite-dimensional $C^*$-algebra $B$ was shown to be isomorphic to $\cO_{\dim B},$ the Cuntz algebra on $\dim B$ generators.

In general, working with the generators and relations of a (local) quantum Cuntz--Krieger algebra is challenging because the relations are given only in terms of a matrix containing the generators as coefficients, rather than relations directly between generators (as one has for Cuntz--Krieger algebras, and more generally, graph algebras). Moreover, calculating the isomorphism classes of local quantum Cuntz--Krieger algebras for non-extremal quantum graphs is of general interest to the quantum graph community due to the historical success of studying classical graphs via its associated graph $C^*$-algebra and current interest in the development of dynamical theory that serves as a quantum analogue to the infinite-path dynamics of classical graphs and subshifts. 

One of the main results of the present paper is Theorem~\ref{thm:lqck_structure_theorem}, a structure theorem for local quantum Cuntz–Krieger families. It gives orthogonality relations for the matrix coefficients of a local family and shows that their products are constrained by the associated quantum edge correspondence. In this sense, the theorem provides a quantum analogue of the structural relations satisfied by the edge partial isometries of an ordinary graph $C^*$-algebra: Let $E=(E^0,E^1,r,s)$ be a directed graph and $\{S_e:e\in E^1\}$ be a collection of partial isometries for a Cuntz--Krieger $E$-family. Then \cite[Proposition 1.12]{Rae05} states 
\begin{enumerate}
    \item $S_e^*S_f \neq 0 
    \quad \Longrightarrow \quad 
    e=f$;
    \item $S_e S_f\neq 0
    \quad \Longrightarrow \quad
    s(e)=r(f)$;
    \item $S_eS_f^*\neq 0 
    \quad  \Longrightarrow \quad s(e)=s(f)$.
    \end{enumerate}
When the graph $E$ is finite, the family $\{S_e:e\in E^1\}$ satisfies Cuntz--Krieger relations for the edge matrix of the graph $E$. Thinking now of $f_{ij}^{(a)}$ and $f_{k\ell}^{(b)}$ as elements of a finite quantum set, we obtain analogous relations on the quantum graph's local quantum Cuntz--Krieger algebra:\\

\noindent {\bf Theorem~\ref{thm:lqck_structure_theorem}.}    Let $\cG=(B,\psi,A)$ be a finite quantum graph such that $\psi$ is a $\delta$-form and $A$ is cp. Given $B\cong \oplus_{a=1}^d M_{n(a)}$, let $\{S^{(a)}_{ij}:1\leq i,j\leq n\}$ be the generators of a local quantum Cuntz--Krieger $\cG$-family $S$. Then 
\begin{enumerate}
    \item $(S_{ij}^{(a)})^* S_{k\ell}^{(b)} \neq 0 
    \quad \, \Longrightarrow \quad 
    a=b$ and $i=k$
    \item $S_{ij}^{(a)}S_{k\ell}^{(b)}\neq 0
    \quad \quad \,\,\,\Longrightarrow \quad
    e_{ij}^{(a)}\cdot \epsilon_{\cG} \cdot e_{k\ell}^{(b)}\neq 0$
    \item $S_{ij}^{(a)}(S_{k\ell}^{(b)})^*\neq 0 
    \quad \,\, \Longrightarrow \quad e_{ij}^{(a)}\cdot \epsilon_\cG\cdot A(e_{\ell m}^{(b)})\neq 0$ for some $1\leq m\leq n_b$,
    \end{enumerate}
where $\epsilon_\cG$ denotes the cyclic generator of the quantum edge correspondence for $\cG$.
For a generator $S_{ij}^{(a)}$, the column index $j$ behaves as a quantum source coordinate while the row index $i$ behaves as a quantum range coordinate; thus we can think of $ij$ as representing a quantum edge pointing from some sort of underlying vertices $j$ to $i$. The third relation replaces the classical condition $s(e)=r(f)$ that the two matrix units $e_{ij}^{(a)}$ and $e_{k\ell}^{(b)}$ have nonzero interaction in the quantum edge correspondence for $\cG$.

Theorem~\ref{thm:lqck_structure_theorem} is most useful when the quantum edge correspondence has a nice form. The dephasing quantum graph is one such example, and it is a natural example to consider. In \cite[Theorem 3.1]{Matsuda}, Matsuda identifies the dephasing graph on $M_2$ as one of only four (up to classical isomorphism) reflexive quantum graphs on $M_2$, with two of the other four being the trivial quantum graph and the complete quantum graph. As the isomorphism class of these extremal graphs were computed in \cite{BHINW23}, the remaining two quantum graphs (and their generalizations to higher dimensions), are canonical choices for which to compute the local quantum Cuntz--Krieger algebras.


In \cite{HIN25}, it was observed that the main result of \cite{Marrero-Muhly05} combined with \cite[Corollary 2.6]{BHINW23} allows one to identify the stable isomorphism class of a local quantum Cuntz--Krieger algebra for a single-vertex quantum graph. For the single-vertex dephasing quantum graph on $M_n$, the local quantum Cuntz--Krieger algebra is stably isomorphic to $\cO_n$. The main application of Theorem~\ref{thm:lqck_structure_theorem} is identifying Cuntz isometries in the local quantum Cuntz--Krieger algebra for the dephasing graph and showing that the Cuntz isometries generate the entire local quantum Cuntz--Krieger algebra in the case when the underlying vertex set is a single full matrix algebra $M_n$.

Section~\ref{sec:preliminaries} includes preliminaries on quantum graphs and (local) quantum Cuntz--Krieger algebras. For the sake of clarity in calculations for Theorem~\ref{thm:lqck_structure_theorem}, we introduce definitions of both the local and full quantum Cuntz--Krieger algebras, though the structure theorem itself pertains only to the local algebras. Section~\ref{sec:lqck_structure_theorem} is dedicated to proving Theorem~\ref{thm:lqck_structure_theorem}, a structure theorem for arbitrary local quantum Cuntz--Krieger families. In Section~\ref{sec:cuntz_algebra_embedding_theorem}, we apply Theorem~\ref{thm:lqck_structure_theorem} to the local quantum Cuntz--Krieger algebra for the dephasing quantum graph to identify Cuntz isometries. The main result of the final section is the following:\\

\noindent {\bf Corollary~\ref{cor:cuntz_isometries_for_L(M_n-tr-D}.} The local quantum Cuntz--Krieger algebra for the dephasing graph on $M_n$ is isomorphic to the Cuntz algebra $\cO_n$.
 
\section{Preliminaries}\label{sec:preliminaries}

A finite quantum set consists of a finite-dimensional $C^*$-algebra $B$ equipped with a state $\psi$. As $B$ is finite-dimensional, it decomposes into a direct sum of finite-dimensional simple $C^*$-algebras $B\cong \oplus_{a=1}^d M_{n(a)}$ for some $d\in \bN$. Throughout, we assume $\psi$ is a $\delta$-form, which means the associated density matrix $\rho_\psi = \oplus \rho^{(a)}$ on $B$ is invertible and $\Tr_a((\rho^{(a)})^{-1}) = \delta^2$ for all $1\leq a \leq d$ where $\Tr_a$ denotes the non-normalized trace on $M_{n(a)}$. 
For examples to keep in mind, the unique $\delta$-form on $M_n$ is the normalized trace $\tr=\frac{1}{n}\Tr$. In this case, $\delta^2 = n^2$, and we call $(M_n,\tr)$ a {\em single-vertex finite quantum graph}.
 For general $B\cong \oplus_{a=1}^d M_{n(a)}$, the unique tracial $\delta$-form on $B$ is given by 
    \[
    \tr(x):= \frac{1}{\dim{B}}\sum_{a=1}^d n_a \Tr_a(x),
    \]
    and in this case, $\delta^2 = \dim{B}$.

The multiplication map $m:B\otimes B\to B$ given by $x\otimes y\mapsto xy$ has a Hermitian adjoint $m^*$ with respect to the inner products induced on $B\otimes B$ and $B$ by $\psi\otimes \psi$ and $\psi$, respectively. The map $m^*:B\to B\otimes B$ is called the {\em comultiplication} on $B$. When $\psi$ is a $\delta$-form, the composition $m  m^*$ equals $ \delta^2 \cdot \id_B.$ The following co-associativity property holds and will be used later:
\begin{equation}\label{eq:co-associativity}
    (m^*\otimes \id)  m^* = (\id \otimes m^*)  m^*.
\end{equation}
Also, we have the so-called {\em snake equations}:
\begin{equation}\label{eq:Snake_Lemma}
    (\id \otimes m)  (m^*\otimes \id) = m^*  m = (m\otimes \id)  (\id \otimes m^*).
\end{equation}
Rather than including a full introduction of string diagrams for Fr\"obenius algebras and using these to prove our later structure theorems for local quantum Cuntz--Krieger algebras, we rely solely on doing our computations via expansion and reduction of function compositions involving comultiplication with (\ref{eq:co-associativity}) and (\ref{eq:Snake_Lemma}) above. We note also the following notational expressions of associativity of multiplication $\mu: \mathcal{C}\odot \mathcal{C}\to \mathcal{C}$ for any $C^*$-algebra $\mathcal{C}$:
\begin{equation}\label{eq:associativity1}
\mu   (\id \otimes \mu) 
= (\id \otimes \mu)  \mu
\end{equation}
\vspace{-.5cm}
\begin{equation}\label{eq:associativity2}
\mu   (\id \otimes \mu)   (\id \otimes \mu \otimes \id) 
= \mu  (\mu \otimes \mu) 
= \mu   (\id \otimes \mu)   (\id \otimes \id\otimes \mu).
\end{equation}


\begin{definition}\label{def:quantum_graph}
    A {\em quantum adjacency matrix} on a finite quantum set $(B,\psi)$ is a linear map $A:B\to B$ such that
\[
m  (A\otimes A)  m^* = \delta^2 A.
\]
A {\em finite quantum graph} is a triple $(B,\psi,A)$ such that $\psi$ is a $\delta$-form.
\end{definition}


We will always assume that $A$ is cp. When $A$ is a quantum adjacency matrix, \cite[Proposition 2.23]{Matsuda} states $A$ is cp if and only if $A$ is positive, if and only if $A$ is $*$-preserving.


\begin{definition}\label{def:lqck_family}
Let $\cC$ be a unital $C^*$-algebra equipped with $\mu:\cC\odot \cC\to \cC$ defined by $x\otimes y\mapsto xy$. Defined in \cite{BHINW23}, a linear map $S:B\to \cC$ is a {\em local quantum Cuntz--Krieger $(B,\psi,A)$-family} if and only if
\begin{equation}\label{eq:lqck1}
    \mu  (\mu\otimes \id)  (S\otimes S^*\otimes S)  (m^*\otimes \id)
     =\delta^{-2} S  m \tag{\text{LQCK1}}
\end{equation}
\vspace{-.5cm}
\begin{equation}\label{eq:lqck2}
\mu  (S^*\otimes S) = \delta^{-2}  \mu  (S\otimes S^*)  m^*  A  m\tag{\text{LQCK2}}
\end{equation}
\vspace{-.5cm}
\begin{equation}\label{eq:lqck3}
     \mu  (S\otimes S^*)  m^*(1_B)=\delta^{-2} 1_{\mathcal{C}} \tag{\text{\text{LQCK3}}}.
\end{equation}
\end{definition}


By linearity, the $C^*$-algebra generated by a local quantum Cuntz--Krieger $(B,\psi,A)$-family $S:B\to \mathcal{C}$ can be generated as a $C^*$-algebra by the images $\{S(f^{(a)}_{ij})\}_{i,j=1}^n$ of $S$ applied to the adapted matrix units for $B\cong \oplus_{a=1}^d M_{n_a}$, where for each $1\leq a \leq d$ and $1\leq i,j\leq n_a:$ \[f_{ij}^{(a)}=\psi(e_{ij}^{(a)})^{-1} e_{ij}^{(a)}.\]
Note that the product of any two adapted matrix units satisfies
\[
f_{ij}^{(a)}f_{k\ell}^{(b)} = \delta_{\stackrel{a=b}{j=k}} \psi(e_{jj}^{(a)})^{-1} f_{i\ell}^{(a)}.
\]

 \begin{definition}
   Let $\cG:=(B,\psi,A)$ be a finite quantum graph such that $\psi$ is a $\delta$-form and $A$ is cp. A local quantum Cuntz--Krieger $\cG$-family $s:B\to \mathcal{C}$ is {\em universal} if for any other local quantum Cuntz--Krieger $\cG$-family $S:B\to \mathcal{C}$ there is a $*$-homomorphism $\rho: C^*(s(B))\twoheadrightarrow C^*(S(B))$ satisfying $\rho \circ s=S$. The {\em local quantum Cuntz--Krieger algebra} $L\cO(\cG)$ is the $C^*$-algebra generated by the range of the universal local quantum Cuntz--Krieger $\cG$-family. 
 \end{definition}


Given a finite quantum graph $\cG:=(B,\psi,A)$, every local quantum Cuntz--Krieger $\cG$-family is readily seen to be a quantum Cuntz--Krieger $\cG$-family due to the relation $m  m^* = \delta^2 \id$.


\begin{definition}\label{def:qck_family}
Let $\cG:=(B,\psi,A)$ be a finite quantum graph such that $\psi$ is a $\delta$-form and $A$ is cp. Let $\cC$ be a unital $C^*$-algebra equipped with $\mu:\cC\odot \cC\to \cC$ defined by $x\otimes y\mapsto xy$. Defined in \cite{BEVW22} with the third condition added in \cite{BHINW23}, a linear map $S:B\to \cC$ is a {\em quantum Cuntz--Krieger $\cG$-family} if and only if
\begin{equation}\label{eq:qck1}
   \mu  (\mu\otimes \id)  (S\otimes S^*\otimes S)  (m^*\otimes \id)
      m^*   =S \tag{\text{QCK1}}
\end{equation}
\vspace{-.5cm}
\begin{equation}\label{eq:qck2}
\mu  (S^*\otimes S)  m^* =  \mu  (S\otimes S^*)  m^*  A\tag{\text{QCK2}}
\end{equation}
\vspace{-.5cm}
\begin{equation}\label{eq:qck3}     \mu  (S\otimes S^*)  m^*(1_B)=\delta^{-2} 1_{\mathcal{C}} \tag{QCK3}.\end{equation} \end{definition}


Given a finite quantum graph $\cG:=(B,\psi,A)$ such that $\psi$ is a $\delta$-form, define the {\em quantum edge checker} to be the element of $B\otimes B$ given by 
\[
\epsilon_{\cG}
:=
\delta^{-2} (\id\otimes A)  m^*(1_B).\]
When $\cG$ is a simple classical graph, $\epsilon_\cG$ is the characteristic function on the set of edges $E^1\subset E^0\times E^0$. For $x,y\in B$ and $\xi = \sum_{j=1}^k a_j\otimes b_j\in B\otimes B$, denote the usual left and right actions of $B$ on $B\otimes B$ by
\[
x\cdot \xi \cdot y := \sum_{j=1}^k (x a_j)\otimes (b_j y).
\]
The {\em quantum edge correspondence} for $\cG$ is the $B$-submodule of $B\otimes B$ generated by $\epsilon_\cG$:
\[
E_{\cG}:= \spn{\{x \cdot \epsilon_\cG \cdot y:x,y\in B\}},
\]
equipped with the right $B$-linear inner product defined on spanning elements by
\[
\langle x_1\cdot \epsilon_\cG\cdot y_1\;|\; x_2\cdot \epsilon_\cG \cdot y_2\rangle
:=
\delta^{-2}y_1^*A(x_1^*x_2)y_2 \quad \forall x_1,y_1,x_2,y_2\in B.
\]
The $B$-bimodule $E_\cG$ with this $B$-valued inner product is a $B$-correspondence; see \cite[Section 2]{BHINW23} for more details. Moreover, \cite[Proposition 2.6]{BHINW23} states that $E_\cG$ is isomorphic to $B\otimes_A B$ as $B$-correspondences (see \cite[Example 1.1, Corollary 2.6]{BHINW23}).

When no central summand of $B$ belongs to the kernel of $A$, \cite[Corollary 3.7]{BHINW23} states $L\cO(\cG)\cong \cO_{E_{\cG}}$, where $\cO_{E_\cG}$ is the Cuntz--Pimsner algebra for $E_\cG$. Because we will not require the theory of Cuntz--Pimsner algebras in the present paper, we will not review the definition or construction, but refer the reader to \cite{Katsura04}. In the setting where $\cG$ has a single vertex $M_n$, the main result of \cite{Marrero-Muhly05} gives the stable isomorphism class of $\cO_{E_\cG}$.


\begin{theorem}\label{thm:morita_equivalence}
Let $(M_n,\tr,A)$ be a single-vertex finite quantum graph such that $A$ is cp. Then $L\cO(M_n,\tr,A)$ is Morita equivalent to $\cO_{r(A)}$, where $r(A)$ is the Kraus rank of $A$.
\end{theorem}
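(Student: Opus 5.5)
This follows from the chain of identifications already recalled, together with the Marrero--Muhly classification, so the plan is to verify their hypotheses and check that the Kraus rank enters correctly.

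\emph{Step 1: pass to a Cuntz--Pimsner algebra.} For $B=M_n$ the only central summand is $M_n$ itself. If $A\neq 0$, this summand is not in $\ker A$, and \cite[Corollary 3.7]{BHINW23} gives $L\cO(M_n,\tr,A)\cong \cO_{E_\cG}$. The degenerate case $A=0$, where $r(A)=0$, I would handle separately by convention or exclude. It is incompatible with (LQCK2) and (LQCK3) unless the family is zero, so it does not arise meaningfully.

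\emph{Step 2: identify $E_\cG$.} By \cite[Proposition 2.6]{BHINW23}, $E_\cG\cong M_n\otimes_A M_n$ as $M_n$-correspondences. Since $A$ is completely positive, write a Kraus decomposition $A(x)=\sum_{k=1}^{r}K_k^*xK_k$ with $\{K_k\}$ linearly independent and $r=r(A)$. I would then show that the GNS correspondence $M_n\otimes_A M_n$ is isomorphic to the correspondence $\bC^n\otimes\bC^r$-type module, namely $M_{n,nr}$ or equivalently $M_n^{\,r}$, with the left action twisted through the Stinespring isometry. Concretely, $x\otimes y\mapsto (K_1y,\dots,K_ry)$ composed with the left multiplication by $x$. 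Because $M_n$ is simple, a finite-dimensional nondegenerate $M_n$-correspondence is determined up to isomorphism by a multiplicity: it is $M_n$-$M_n$ isomorphic to $(\bC^n)\otimes \bC^m\otimes(\bC^n)^*$ for some $m$. Comparing dimensions, $\dim_\bC (M_n\otimes_A M_n)=n^2 r$, so $m=r(A)$. The dimension is the rank of the inner product $\delta^{-2}y_1^*A(x_1^*x_2)y_2$, and I would compute it through the Choi matrix of $A$, whose rank is exactly $r(A)$.

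\emph{Step 3: invoke Marrero--Muhly.} The main result of \cite{Marrero-Muhly05} states that for a correspondence over $M_n$ of multiplicity $m$, that is, one Morita equivalent to the $\bC$-correspondence $\bC^m$, the Cuntz--Pimsner algebra is Morita equivalent (indeed stably isomorphic) to $\cO_m$. Applying this with $m=r(A)$ finishes the argument.

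The main obstacle is Step 2: correctly matching the multiplicity of $E_\cG$ with the Kraus rank. The dimension count via the Choi matrix should settle it, but care is needed with the $\delta$-normalizations and with the null space that is quotiented out in $M_n\otimes_A M_n$.
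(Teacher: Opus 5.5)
Your proposal is correct and follows the same route the paper indicates: \cite[Corollary 3.7]{BHINW23} gives $L\cO(\cG)\cong\cO_{E_\cG}$ when $A\neq 0$, \cite[Proposition 2.6]{BHINW23} identifies $E_\cG$ with $M_n\otimes_A M_n$ (up to the harmless $\delta^{-2}$ rescaling), and the main result of \cite{Marrero-Muhly05} then gives the Morita equivalence with $\cO_{r(A)}$. Your Step 2 count of the multiplicity as $r(A)$ via $x\otimes y\mapsto (xK_ky)_k$ spells out what Marrero--Muhly's theorem already packages, since it is stated directly for cp maps on $M_n$ in terms of Kraus rank, and your separate treatment of the degenerate case $A=0$ is a fair caveat that the paper leaves implicit.
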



\noindent Because $\cO_{r(A)}$ is $\sigma$-unital, we can further conclude that $L\cO(M_n,\tr,A)$ and $\cO_{r(A)}$ are stably isomorphic under the hypotheses of the above theorem. While Theorem~\ref{thm:morita_equivalence} is quite useful for identifying a candidate isomorphism class of $L\cO(M_n,\tr,A)$, it does not indicate the exact isomorphism class---more relations amongst the generators of $L\cO(M_n,\tr,A)$ seem to be necessary.


\section{Structure theorem for local quantum Cuntz--Krieger algebras}\label{sec:lqck_structure_theorem}

Throughout, $\cG=(B,\psi,A)$ is a finite quantum graph such that $\psi$ is a $\delta$-form and $A$ is cp, and $E_\cG$ is the associated quantum edge correspondence generated cyclically by $\epsilon_\cG$. 
Throughout, $\mathcal{C}$ denotes a unital $C^*$-algebra and $\mu:\mathcal{C}\odot \mathcal{C}\to \mathcal{C}$ is defined by $x\odot y\mapsto xy$. The proof of the following structure theorem iteratively uses the local quantum Cuntz--Krieger relations along with associativity and co-associativity of $\mu$ and $m^*$, respectively.

\begin{theorem}\label{thm:lqck_structure_theorem}
    Suppose $B\cong \oplus_{a=1}^d M_{n_a}(\bC)$, and let $\cG:=(B,\psi,A)$ be a quantum graph such that $\psi$ is a $\delta$-form and $A$ is cp. Given any local quantum Cuntz--Krieger $\cG$-family $S:B\to \mathcal{C}$, for all $1\leq a,b\leq d$ and $1\leq i,j\leq n_a$, $1\leq k,\ell\leq n_b$: 
    \begin{enumerate}
        \item $(S^{(a)}_{ij})^* S^{(b)}_{k\ell} \neq 0 
        \quad \Longrightarrow \quad 
        a=b\text{ and }i=k$.
        \item $S^{(a)}_{ij} S^{(b)}_{k\ell} \neq 0 
        \quad \quad \; \Longrightarrow \quad
        f_{ij}^{(a)}\cdot \epsilon_\cG\cdot f_{k\ell}^{(b)}\neq 0$.
        \item $S^{(a)}_{ij} (S^{(b)}_{k\ell})^* \neq 0 
        \quad \,\Longrightarrow \quad
        \exists 1\leq m\leq n_b$ such that $f_{ij}^{(a)}\cdot \epsilon_\cG\cdot A(f_{\ell m}^{(b)}) \neq 0$. 
    \end{enumerate}
\end{theorem}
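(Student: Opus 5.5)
We will work entirely in coordinates, expanding the relations (LQCK1)--(LQCK3) on adapted matrix units. Throughout we write $S^{(a)}_{ij}:=S(f^{(a)}_{ij})$. The basic computational input is the formula
\[
m^*(f^{(a)}_{ij})=\sum_{p}f^{(a)}_{ip}\otimes f^{(a)}_{pj},
\]
which holds up to the normalisation built into the adapted units and the $\delta$-form; we will verify it by computing $\langle m^*(x),y\otimes z\rangle=\psi(x^*yz)$. We also use the expansion of $m^*(1_B)$ as $\sum_{a,i,j}\psi(e^{(a)}_{jj})\, f^{(a)}_{ij}\otimes f^{(a)}_{ji}$ with suitable weights, so that
\[
\epsilon_\cG=\delta^{-2}\sum f^{(a)}_{ij}\otimes A(f^{(a)}_{ji})
\]
with matching weights. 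Substituting these formulas into (LQCK2), (LQCK1) and (LQCK3) converts each relation into an explicit identity among the $S^{(a)}_{ij}$.

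\textbf{Step 1 (part 1).} Evaluate (LQCK2) on $f^{(a)}_{ji}\otimes f^{(b)}_{k\ell}$. By the $*$-preserving property of $S$ (which holds since $A$ is cp; if it is not already available we will extract it from (LQCK1)), the left side becomes a scalar multiple of $(S^{(a)}_{ij})^*S^{(b)}_{k\ell}$. The right side contains the factor $m(f^{(a)}_{ji}\otimes f^{(b)}_{k\ell})$, which vanishes unless $a=b$ and $i=k$. Hence the left side vanishes too, which gives part 1. When $a=b$ and $i=k$, the same identity gives the useful formula
\[
(S^{(a)}_{ij})^*S^{(a)}_{i\ell}=c\,\mu(S\otimes S^*)m^*A(f^{(a)}_{j\ell}).
\]

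\textbf{Step 2 (part 2).} We will show that $S_xS_y$ factors through $x\cdot\epsilon_\cG\cdot y$. Concretely, we aim for an identity of the form
\[
\mu(S\otimes S)(x\otimes y)=\Phi(x\cdot\epsilon_\cG\cdot y)
\]
for some linear map $\Phi:B\otimes B\to\cC$. The natural candidate is
\[
\Phi(u\otimes v)=\delta^{2}\,\mu(\mu\otimes \id)\bigl(S\otimes S\otimes S^*\bigr)\ldots,
\]
obtained as follows.
\begin{enumerate}
\item Use (LQCK1) in reverse, in the form $S(x)=\delta^2\,\mu(\mu\otimes\id)(S\otimes S^*\otimes S)(m^*(x)\otimes 1)$ after applying $m$ with the unit.
\item Write $S_y=\delta^2\,\mu(S\otimes S^*)m^*(1)\,S_y$ using (LQCK3).
\item In the product $S_xS_y$, this produces a middle factor $S_x S_{p}S_q^*S_y$. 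Rewrite $S_q^*S_y$ via (LQCK2) as $\mu(S\otimes S^*)m^*A(q^*y)$.
\item Then use co-associativity \eqref{eq:co-associativity} and the snake equations \eqref{eq:Snake_Lemma} to collapse the sum into an expression depending only on $\sum x f_{p}\otimes A(f_q)y$, which is exactly $x\cdot\epsilon_\cG\cdot y$.
\end{enumerate}
Hence if $f^{(a)}_{ij}\cdot\epsilon_\cG\cdot f^{(b)}_{k\ell}=0$, then $S^{(a)}_{ij}S^{(b)}_{k\ell}=0$.

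\textbf{Step 3 (part 3).} We insert (LQCK3) on the right of $S^{(b)}_{k\ell}{}^*$, in the form $(S^{(b)}_{k\ell})^*=\delta^2\sum (S^{(b)}_{k\ell})^* S_{\ell' m}S^*_{\ldots}$. By Step 1, only the terms with $\ell'$ in block $b$ and row $k$ survive, and these involve $A(f^{(b)}_{\ell m})$. This yields
\[
S^{(a)}_{ij}(S^{(b)}_{k\ell})^*=\sum_m S^{(a)}_{ij}\,\mu(S\otimes S^*)m^*A(f^{(b)}_{\ell m})\cdot(\text{stuff}).
\]
Each term $S^{(a)}_{ij}S_u$ arising here is then controlled by Step 2 through $f^{(a)}_{ij}\cdot\epsilon_\cG\cdot u$. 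Summing over the legs of $m^*A(f^{(b)}_{\ell m})$ and using the snake equation, the relevant quantity becomes $f^{(a)}_{ij}\cdot\epsilon_\cG\cdot A(f^{(b)}_{\ell m})$ (after applying $\id\otimes m$). If this vanishes for every $m$, the product is zero, which gives part 3.

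The main obstacle is Step 2: finding the exact factorisation of $\mu(S\otimes S)$ through $\epsilon_\cG$ and tracking the snake and co-associativity manipulations with correct constants and index placement. Step 3 reduces to Step 2 once that factorisation is in hand.
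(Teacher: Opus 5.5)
Part 1 is fine, and your Step 3 is a valid reduction. But Step 2, which everything rests on, does not work as outlined.

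\textbf{The gap in Step 2.} Inserting $1=\delta^2\mu(S\otimes S^*)m^*(1)$ in front of $S_y$ and applying (LQCK2) to $S^*(q)S(y)$ gives
\[
S_xS_y=\sum S_x\,S(p)\,\bigl[\mu(S\otimes S^*)m^*A\bigr](qy),\qquad \sum p\otimes q=m^*(1).
\]
This is merely $S_x$ times a re-expansion of $S_y$. Nothing in it couples $x$ to the legs of $m^*(1)$. No relation turns $S_xS(p)$ into a function of $xp$, and in general $A(qy)\neq A(q)y$ because $A$ is not a bimodule map. So no co-associativity or snake manipulation can make this expression depend only on $x\cdot\epsilon_\cG\cdot y$.

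\textbf{The missing idea.} Apply (LQCK2) to the $S^*$ produced by expanding $S_x$, so that $A$ lands on a leg of $m^*(x)$. Your sub-step 1 is the right start: (LQCK1) at $x\otimes 1$, followed by (LQCK2) at $x_{(2)}\otimes 1$, gives
\[
S(x)=\sum S(x_{(1)})\,\bigl[\mu(S\otimes S^*)m^*A\bigr](x_{(2)}).
\]
This is what the paper obtains from (QCK1), co-associativity and (QCK2). Now multiply by $S(y)$ and apply (LQCK2) to the last pair $S^*(\cdot)S(y)$. This produces $(\id\otimes\id\otimes m)(\id\otimes m^*\otimes\id)=\id\otimes m^*m$ by the snake equation. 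Hence $\mu(S\otimes S)$ factors linearly through $(\id\otimes m)\bigl(\delta^{-2}(\id\otimes A)m^*(x)\otimes y\bigr)=x\cdot\epsilon_\cG\cdot y$. The (LQCK3) insertion plays no role.

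\textbf{Step 3 once Step 2 is fixed.} Write the factorisation as $\mu(S\otimes S)=\Phi\circ\pi$ with $\pi(x\otimes y)=x\cdot\epsilon_\cG\cdot y$. Then your Step 3 goes through, and it is shorter than the paper's argument, which reruns the whole computation for $\mu(S\otimes S^*)$.
\begin{enumerate}
\item Your (LQCK3)+(LQCK2) insertion gives $S^*(z)=\sum[\mu(S\otimes S^*)m^*A](z_{(1)})\,S^*(z_{(2)})$, the same expansion of $S^*$ the paper uses.
\item The bimodule property of $m^*$ gives $\sum(x\cdot\epsilon_\cG\cdot u_{(1)})\otimes u_{(2)}=(\id\otimes m^*)(x\cdot\epsilon_\cG\cdot u)$.
\item Hence $S^{(a)}_{ij}(S^{(b)}_{k\ell})^*$ is a linear image of $\sum_m\bigl(f^{(a)}_{ij}\cdot\epsilon_\cG\cdot A(f^{(b)}_{\ell m})\bigr)\otimes f^{(b)}_{mk}$.
\end{enumerate}
Note that this needs the linear factorisation, not just the implication stated in part 2, because the individual legs of $m^*A(f^{(b)}_{\ell m})$ are not controlled.

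\textbf{A correction in Step 1.} $S$ is not $*$-preserving: for $B=\bC$ and $A=\id$ the relations say exactly that $S(1)$ is unitary. Cp-ness of $A$ says nothing about $S$. The identity you need is just the definition $S^*(x)=S(x^*)^*$ together with $(f^{(a)}_{ji})^*=f^{(a)}_{ij}$.
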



\begin{proof}
    Let $\cG = (B,\psi,A)$ be a finite quantum graph and let $S:B\to \mathcal{C}$ be a local quantum Cuntz--Krieger $\cG$-family. The first property follows immediately by (\ref{eq:lqck2}).

{\em Proof of $(2)$.} Observe
    \begin{align*}
        \mu  (S\otimes S)
        &\overset{(\ref{eq:qck1})}{=}
        \mu  ([\mu   (\mu \otimes \id)   (S\otimes S^*\otimes S)  (m^*\otimes \id)  m^*]\otimes S) \\
        &\overset{(\ref{eq:co-associativity})}{=}
        \mu  ([\mu   (\mu \otimes \id)   (S\otimes S^*\otimes S)  ( \id\otimes m^*)  m^*]\otimes S) \\
        &= \mu   (\mu\otimes \id)  \circ [ S\otimes(\mu   (S^*\otimes S)   m^* ) \otimes S]  \circ (m^*\otimes \id) \\
        &\overset{(\ref{eq:qck2})}{=} \mu   (\mu\otimes \id) \circ   [ S\otimes(\mu   (S\otimes S^*)   m^*   A) \otimes S]   \circ(m^*\otimes \id) \\
        &= \mu   (\mu\otimes \mu)  \circ [ S\otimes S\otimes S^* \otimes S]  \circ
        (\id \otimes m^* \otimes \id)  \circ [(\id \otimes A)  m^* \otimes \id] \\
        &= \mu   (\mu\otimes \id) \circ  [ S\otimes S\otimes (\mu  (S^* \otimes S))] \circ (\id \otimes m^* \otimes \id)  \circ [(\id \otimes A)  m^* \otimes \id]\\
        &\overset{(\ref{eq:lqck2})}{=} \mu   (\mu\otimes \id) \circ   [ S\otimes S\otimes \delta^{-2}(\mu  (S \otimes S^*)  m^*  A  m)]  \\
        & \quad  \circ (\id \otimes m^* \otimes \id)  \circ [(\id \otimes A)  m^*\otimes \id] \\
       &= \mu   (\mu\otimes \id)  
       \circ
       [ S\otimes S\otimes (\mu  (S \otimes S^*)  m^*  A)] \\
        & \quad \quad  \circ (\id \otimes \id \otimes m) \circ   (\id \otimes m^* \otimes \id)  \circ  [\delta^{-2}(\id \otimes A)  m^* \otimes \id].
    \end{align*}
    We focus on the expression in the last line of the above computation. Equation~\ref{eq:Snake_Lemma} allows us to rewrite
    \[
    (\id \otimes \id \otimes m)\circ (\id\otimes m^*\otimes \id)
    =
    \id \otimes m^*m
    =
    (\id\otimes m^*)\circ (\id \otimes m),
    \]
    so we can say that $\mu(S\otimes S) = \mathcal{W}\circ (\id \otimes m)\circ [\delta^{-2}(\id \otimes A)m^*\otimes \id]$ for some linear operator $\mathcal{W}$. 
    As $\delta^{-2}(\id \otimes A)m^*(f_{ij}^{(a)})=f_{ij}^{(a)}\cdot \epsilon_\cG$, we have
    \[
    \left[(\id \otimes m) \circ [\delta^{-2}(\id\otimes A)  m^* \otimes \id]\right](f^{(a)}_{ij}\otimes f^{(b)}_{k\ell}) 
    = (\id \otimes m)(f^{(a)}_{ij}\cdot \epsilon_\cG\otimes f^{(b)}_{k\ell}) 
    = f^{(a)}_{ij}\cdot \epsilon_{\cG}\cdot f^{(b)}_{k\ell}.
    \]
Therefore, $f^{(a)}_{ij}\cdot \epsilon_{\cG}\cdot f^{(a)}_{k\ell}=0$ implies $S^{(a)}_{ij}S^{(b)}_{k\ell}=0$ for all $1\leq a,b\leq d$ and $1\leq i,j\leq n_a,1\leq k,\ell\leq n_b.$ We follow a similar strategy for obtaining the third claim. 

{\em Proof of $(3)$.} Observe
    \begin{align*}
    \mu  (S\otimes S^*)
    &\overset{(\ref{eq:qck1})}{=}
    \mu ([\mu  (\mu\otimes \id)   (S\otimes S^*\otimes S)   (m^*\otimes \id)  m^* ]\otimes S^*)\\   &\overset{(\ref{eq:associativity2})}{=}
    \mu (\mu\otimes \id) \circ  [S\otimes (\mu  (S^*\otimes S)  m^*) \otimes S^*) \circ (m^*\otimes \id)\\  
    &\overset{(\ref{eq:lqck2})}{=}
    \mu (\mu\otimes \id)  \circ [S\otimes (\mu  (S\otimes S^*)  m^*  A] \otimes S^*]\circ (m^*\otimes \id)\\   &\overset{(\ref{eq:associativity2})}{=}
    \mu (\mu\otimes \mu) \circ (S\otimes  S\otimes S^*\otimes S^*)  \circ [(\id\otimes m^*) \circ 
   ((\id\otimes A)  m^*\otimes \id)]\\
    &=
     \mu (\mu\otimes \mu) \circ (S\otimes  S\otimes S^*\otimes \id)  \circ \underbrace{[(\id\otimes m^*)(\id\otimes A)m^*\otimes S^*]}_{(\star)}.
    \end{align*}
Focusing on the $S^*$ factor in $(\star)$, applications of (QCK1) and (QCK2) yield
 \begin{align*}
     S^*
     &\overset{(\ref{eq:qck1})}{=}
      \mu   (\mu\otimes \id)   (S^*\otimes S \otimes S^*)  (m^*\otimes \id)  m^*\\
    &\,\,=
     \,\, \mu 
    [\mu  (S^*\otimes S)  m^* \otimes S^*] m^*\\
     &\overset{(\ref{eq:qck2})}{=}
 \mu  [\mu  (S\otimes S^*)  m^* A\otimes S^*] m^*\\
      &=
 \mu  [\mu  (S\otimes S^*)  m^* \otimes S^*] (A\otimes \id)m^*.
 \end{align*}
    Plugging this expression for $S^*$ back into $(\star)$, we have
    \begin{align*}
    (\star)
    &=
    [(\id\otimes m^*)(\id\otimes A)m^*] \otimes [\mu  [\mu  (S\otimes S^*)  m^* \otimes S^*] (A\otimes \id)m^*]\\
    &=
    (\id \otimes \mu  [\mu  (S\otimes S^*) \otimes S^*])
    \circ
    \underbrace{( \id\otimes m^*)(\id\otimes A)m^* \otimes (m^*\otimes \id)(A\otimes \id)m^*}_{(\star\star)}.
    \end{align*}

Our last task is to produce the map $(\id\otimes\id\otimes m\otimes \id\otimes \id)$ from the remaining terms of $\mu(S\otimes S^*)$ to post-compose with $(\star\star)$:
    \begin{align*}
        \mu(S\otimes S^*)
        &=
        \mu (\mu\otimes \mu) \circ (S\otimes  S\otimes S^*\otimes \id)  \circ (\star)\\
        &=\mu (\mu\otimes \mu) \circ (S\otimes  S\otimes S^*\otimes \id)\circ 
            (\id \otimes \mu  [\mu  (S\otimes S^*) \otimes S^*])
    \circ (\star\star)\\
    &= \mu(\mu\otimes \mu)(\mu\otimes \mu\otimes \mu) (S\otimes S\otimes S^*\otimes S\otimes S^*\otimes S^*)\circ 
    (\star\star)\\
     &= \mu(\mu\otimes \mu)(\mu\otimes \id \otimes \mu) (S\otimes S\otimes \mu(S^*\otimes S)\otimes S^*\otimes S^*)\circ 
    (\star\star)\\
    &\overset{(\ref{eq:lqck2})}{=} \mu(\mu\otimes \mu)(\mu\otimes \id \otimes \mu) (S\otimes S\otimes (\mu(S\otimes S^*)m^*A)\otimes S^*\otimes S^*)\\
       &\quad \quad  \circ (\id \otimes \id\otimes m\otimes \id\otimes \id)\circ 
    \delta^{-2}(\star\star).
    \end{align*}    
    We have shown $\mu(S\otimes S^*) = \mathcal{T}_1 \circ \mathcal{T}_2$  for some linear operator $\mathcal{T}_1$ and 
    \[
    \mathcal{T}_2
    :=
    (\id \otimes \id\otimes m\otimes \id\otimes \id)\circ 
    \delta^{-2}(\star\star).\]
    Next, we apply Equation~\ref{eq:Snake_Lemma} twice to $\mathcal{T}_2$ to obtain the third claim. We have 
    \begin{align*}
    \mathcal{T}_2
    &=
    (\id \otimes \id \otimes m \otimes \id \otimes \id)
    \circ
    [ (\id\otimes m^*)\delta^{-2}(\id\otimes A)m^* \otimes (m^*\otimes \id)(A\otimes \id)m^*]\\
    &=
   (\id \otimes [(\id \otimes m \otimes \id)\circ (m^*\otimes m^*)] \otimes \id) \circ 
    [\delta^{-2}(\id\otimes A)m^* \otimes (A\otimes \id)m^*]\\
    &\overset{(\ref{eq:Snake_Lemma})}{=}
    (\id \otimes [(m^*\otimes \id)m^*m]\otimes \id)\circ 
    [\delta^{-2}(\id\otimes A)m^* \otimes (A\otimes \id)m^*]\\
  &\overset{(\ref{eq:Snake_Lemma})}{=}
    (\id \otimes (m^*\otimes \id)m^*\otimes \id)\circ 
    \underbrace{(\id \otimes m\otimes id)[\delta^{-2}(\id\otimes A)m^* \otimes (A\otimes \id)m^*]}_{=:\mathcal{T}_3}.
    \end{align*}
Hence, for any $1\leq a,b\leq d,$ and $1\leq i,j\leq n_a,1\leq k,\ell\leq n_b,$ 
\begin{align*}
    \mathcal{T}_3(f_{ij}^{(a)}\otimes f_{\ell k}^{(b)})
    &=
     (\id \otimes m\otimes \id)\left(\delta^{-2}(\id \otimes A)m^*(f_{ij}^{(a)})\otimes (A\otimes \id)m^*(f_{\ell k}^{(b)})\right)\\
    &= (\id \otimes m\otimes \id)
    \left(f_{ij}^{(a)}\cdot \epsilon_{\cG} \otimes \sum_{m=1}^{n_b} A(f_{\ell m}^{(b)})\otimes f_{mk}^{(b)}\right)\\
    &=
    \left(\sum_{m=1}^{n_b} f_{ij}^{(a)}\cdot \epsilon_{\cG} \cdot A(f_{\ell m}^{(b)})\right)\otimes f_{mk}^{(b)}.
\end{align*}
If $f_{ij}^{(a)}\cdot \epsilon_{\cG}\cdot A(f_{\ell m}^{(b)})=0$ for all $1\leq m\leq n_b$, then $\mathcal{T}_3(f_{ij}^{(a)}\otimes f_{\ell k}^{(b)}) = 0$. As $\mu(S\otimes S^*) = \widetilde{\mathcal{W}}\circ \mathcal{T}_3$ for some linear operator $\widetilde{\mathcal{W}}$, we have $\mu(S\otimes S^*)(f_{ij}^{(a)}\otimes f_{\ell k}^{(b)}) = 0$ whenever $f^{(a)}_{ij}\cdot \epsilon_\cG\cdot A(f_{\ell m}^{(b)}) = 0 $ for all $1\leq m\leq n_b$. The contrapositive yields the final claim.
\end{proof}




\section{Cuntz algebras from dephasing quantum graphs}\label{sec:cuntz_algebra_embedding_theorem}

In this section, we apply Theorem~\ref{thm:lqck_structure_theorem} to the dephasing graph. Let $\psi$ be a faithful $\delta$-form on $B$, and define the {\em dephasing graph} $D:B\to B$ by
\[ 
D(f_{ij}^{(a)}):=\delta_{i=j} \psi(e_{ii}^{(a)})\delta^2 f_{ii}^{(a)}
\quad \forall f_{ij}^{(a)} \in B.
\]
For all $1\leq a\leq d,1\leq i,j\leq n_a$, observe
\begin{align*}
m(D\otimes D)m^*(f^{(a)}_{ij})
&=
\sum_{k=1}^{n_a} D(f_{ik}^{(a)})D(f_{kj}^{(a)})\\
&=\delta_{i=j}\left[\delta^2 \psi(e_{ii}^{(a)}) f_{ii}^{(a)} \delta^2 \psi(e_{ii}^{(a)})f_{ii}^{(a)}\right]\\
&=\delta^2 \left[\delta_{i=j} \delta^2 \psi(e_{ii}^{(a)})f_{ii}^{(a)}\right]\\
&= \delta^2 D(f_{ij}^{(a)}).
\end{align*}
By linearity, $m  (D\otimes D)  m^* = \delta^2 D$, so $D$ is a quantum adjacency matrix on $(B,\psi)$. Note also that the collection $\{K_i^{(a)}:1\leq a \leq d,1\leq i\leq n_a\}$ where 
\[K_i^{(a)}:=\delta \psi(e_{ii}^{(a)})^{3/2} \cdot f_{ii}^{(a)}\] is a linearly independent set of Kraus operators for $D$. Indeed, plugging in $f_{k\ell}^{(b)}$, we find
\begin{align*}
\sum_{a=1}^d \sum_{i=1}^{n_a} K_i^{(a)}f_{k\ell}^{(b)}(K_i^{(a)})^*
&=\sum_{a=1}^d\sum_{i=1}^{n_a} (\delta\psi(e_{ii}^{(a)})^{3/2}f_{ii}^{(a)} )f_{k\ell}^{(b)} (\delta\psi(e_{ii}^{(a)})^{3/2}f_{ii}^{(a)})^*\\
&=
\delta^2\sum_{a=1}^d\sum_{i=1}^{n_a} \psi(e_{ii}^{(a)})^3 f_{ii}^{(a)} f_{k\ell}^{(b)}f_{ii}^{(a)}\\
&=
\delta_{k=\ell}\delta^2 \psi(e_{ii}^{(a)})^{3 }\psi(e_{ii}^{(a)})^{-2}f_{kk}^{(b)}\\
&=
\delta_{k=\ell}\delta^2 \psi(e_{ii}^{(a)})f_{kk}^{(b)}\\
&=
D(f_{k\ell}^{(b)}).
\end{align*}
Therefore, $D$ is cp and has Kraus rank $r(D)=\sum_{a=1}^d n_a$. If it held for arbitrary $B$, Theorem~\ref{thm:morita_equivalence} would yield $\cO_{r(D)}$ as the stable isomorphism class of $L\cO_(B,\psi,D)$. In the equal-size case $B=\oplus_{a=1}^d M_n$, we predict that the local quantum Cuntz--Krieger algebra is not just stably isomorphic to $\cO_{r(D)}$, but isomorphic to $\cO_{r(D)}.$


\begin{example}

For the single-vertex dephasing graph $(M_n,\tr,D)$, there is a minimal Kraus decomposition of $D$ given by $\{n f_{ii}: 1\leq i\leq n\}\subset M_n(\mathbb{C})$. Hence, Theorem~\ref{thm:morita_equivalence} implies that $L\cO(M_n,\tr,D)$ is stably isomorphic to $\cO_n$. This theorem has not been established in a more general setting beyond $B=M_n$.

\end{example}


To apply Theorem~\ref{thm:lqck_structure_theorem} to the dephasing graph, we need to know about the quantum edge correspondence $E_D$ for a dephasing graph $(B,\psi,D)$. Its quantum edge checker is given by
\[
\epsilon_D
=
\delta^{-2} (\id \otimes D)m^*(1)
=
\sum_{a=1}^d\sum_{j=1}^{n_a} \psi(e_{jj}^{(a)})^2 f_{jj}^{(a)}\otimes f_{jj}^{(a)},
\]
and so acting on the left and right of $\epsilon_D$ by matrix units, we have that
\[
E_D
=
\spn{\{f^{(a)}_{ij}\otimes f_{jk}^{(a)}: 1\leq a\leq d,1\leq i,j,k\leq n_a\}}.
\]
In particular, any pair of matrix units $f_{ij}^{(a)}$, $f_{\ell m}^{(b)}\in B$ satisfy
\begin{equation}\label{eq:cyclic_generator_dephasing}
f_{ij}^{(a)}\cdot \epsilon_D \cdot f_{\ell m}^{(b)}
\neq 0
\quad \Longrightarrow \quad
a=b\text{ and }j=\ell.
\end{equation}
We shall use this fact in the proof of Theorem~\ref{thm:unital_embedding_of_O_n}, but first, we state and prove Proposition~\ref{prop:LQCK_relations_for_dephasing}, which gives a characterization of the generators of a local quantum Cuntz--Krieger $(B,\psi,D)$-family in terms of renormalized generators. We assume $\psi$ is a faithful $\delta$-form, so that, in particular, $\psi(e_{jj}^{(a)})^{-1}$ is well defined for any choice of $1\leq a\leq d$ and $1\leq j\leq n_a$.


\begin{proposition}\label{prop:LQCK_relations_for_dephasing}
    Let $B\cong \oplus_{a=1}^d M_{n_a}$ and let $\mathcal{C}$ be a unital $C^*$-algebra. Given a linear map $S:B\to \mathcal{C}$, for each $1\leq a \leq d$ and $1\leq i,j\leq n_a$, set 
    \[
    T^{(a)}_{ij}
    :=\delta \,\psi(e_{ii}^{(a)})^{1/2}\cdot S(f^{(a)}_{ij}).
    \] Then $S$ is a local quantum Cuntz--Krieger $(B,\psi,D)$-family if and only if    \begin{equation}\label{eq:LQCK1}\displaystyle \sum\limits_{m=1}^{n_a} T^{(a)}_{im}(T^{(a)}_{jm})^*T^{(b)}_{k\ell}= \delta_{a=b}\delta_{j=k}\, T^{(a)}_{i\ell}\end{equation}
    \begin{equation}\label{eq:LQCK2}
    \displaystyle (T^{(a)}_{ji})^*T^{(b)}_{k\ell} 
    = \delta_{a=b}\delta_{\stackrel{j=k}{i=\ell}} \left(\sum_{m=1}^{n_a} T^{(a)}_{im}(T^{(a)}_{im})^*\right)
    \end{equation} 
    \begin{equation}\label{eq:LQCK3}
    \displaystyle \sum_{a=1}^d \sum\limits_{i,j=1}^{n_a} T^{(a)}_{ij}(T^{(a)}_{ij})^*
    =1_{\mathcal{C}}
    \end{equation}
for all $1\leq a,b\leq d$ and $1\leq i,j\leq n_a, 1\leq k,\, \ell\leq n_b$.
\end{proposition}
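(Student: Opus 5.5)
The plan is a direct computation. Both the local quantum Cuntz--Krieger relations of Definition~\ref{def:lqck_family} and the relations \eqref{eq:LQCK1}--\eqref{eq:LQCK3} are linear conditions in the tensor arguments. So I will evaluate (LQCK1) and (LQCK2) on the spanning elementary tensors $f_{ij}^{(a)}\otimes f_{k\ell}^{(b)}$, and evaluate (LQCK3) directly. Then I will rewrite everything in terms of the rescaled generators $T_{ij}^{(a)}$. The rescaling $S(f_{ij}^{(a)})\mapsto T_{ij}^{(a)}$ is by nonzero scalars, since $\psi$ is faithful. Hence each relation for $S$ on a given basis tensor is equivalent to the corresponding relation for the $T$'s, and both directions of the ``if and only if'' follow at once.

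The ingredients, in order, are as follows.
\begin{enumerate}
\item The comultiplication on adapted matrix units is $m^*(f_{ij}^{(a)})=\sum_{m=1}^{n_a} f_{im}^{(a)}\otimes f_{mj}^{(a)}$. This is consistent with $mm^*=\delta^2\id$, since $\sum_m\psi(e_{mm}^{(a)})^{-1}=\Tr_a((\rho^{(a)})^{-1})=\delta^2$, and it is the formula already used in the proof of Theorem~\ref{thm:lqck_structure_theorem}.
\item The adjoint map is $S^*(x)=S(x^*)^*$, and I use $(f_{ij}^{(a)})^*=f_{ji}^{(a)}$, so that $S^*(f_{mj}^{(a)})=S(f_{jm}^{(a)})^*$.
\item The product rule $f_{ij}^{(a)}f_{k\ell}^{(b)}=\delta_{a=b}\delta_{j=k}\psi(e_{jj}^{(a)})^{-1}f_{i\ell}^{(a)}$.
\item The definition of $D$.
\end{enumerate}

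For (LQCK1), the left side on $f_{ij}^{(a)}\otimes f_{k\ell}^{(b)}$ is $\sum_m S(f_{im}^{(a)})S(f_{jm}^{(a)})^*S(f_{k\ell}^{(b)})$. The right side is $\delta^{-2}\delta_{a=b}\delta_{j=k}\psi(e_{jj}^{(a)})^{-1}S(f_{i\ell}^{(a)})$. Now substitute $S(f_{pq}^{(c)})=\delta^{-1}\psi(e_{pp}^{(c)})^{-1/2}T^{(c)}_{pq}$. Multiplying through by $\delta^{3}\psi(e_{ii}^{(a)})^{1/2}\psi(e_{jj}^{(a)})^{1/2}\psi(e_{kk}^{(b)})^{1/2}$, the scalars cancel exactly when $j=k$, which is the only case where the right side is nonzero. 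This yields \eqref{eq:LQCK1}.

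For (LQCK3), $m^*(1_B)=\sum_a\sum_{i,j}\psi(e_{jj}^{(a)})f_{ij}^{(a)}\otimes f_{ji}^{(a)}$. Hence $\mu(S\otimes S^*)m^*(1_B)=\sum\psi(e_{jj}^{(a)})S(f_{ij}^{(a)})S(f_{ij}^{(a)})^*$. I need to check that the weights assemble to $\delta^{-2}\sum T_{ij}^{(a)}(T_{ij}^{(a)})^*$. If the bookkeeping produces $\psi(e_{jj})$ rather than $\psi(e_{ii})$, I will regroup using the tracial/diagonal form of $\psi$. For the intended examples, where $\psi=\tr$ and all $\psi(e_{ii}^{(a)})$ are equal, the check is immediate.

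The main obstacle is (LQCK2), because $D$ and $m^*$ both enter. On $f_{ji}^{(a)}\otimes f_{k\ell}^{(b)}$, the left side is $S(f_{ij}^{(a)})^*S(f_{k\ell}^{(b)})$. On the right, $m$ first gives $\delta_{a=b}\delta_{i=k}\psi(e_{ii}^{(a)})^{-1}f_{j\ell}^{(a)}$. Then $D$ kills this unless $j=\ell$, producing a multiple of $f_{jj}^{(a)}$. Applying $m^*$ and $\mu(S\otimes S^*)$ gives $\sum_m S(f_{jm}^{(a)})S(f_{jm}^{(a)})^*$ with explicit scalar weight. Matching indices with the relabelled form in \eqref{eq:LQCK2}, and tracking the factors $\delta^2\psi(e_{jj}^{(a)})$ from $D$, $\psi(e_{ii}^{(a)})^{-1}$ from $m$, the $\delta^{-2}$ in (LQCK2), and the rescalings $\delta\psi(\cdot)^{1/2}$, I expect everything to collapse to coefficient $1$. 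I will do this scalar bookkeeping carefully, term by term. If the normalisations appear not to cancel for non-tracial $\psi$, the fallback is to verify the statement first for $\psi$ tracial, where all weights within a block agree, and then to adjust the convention for $f_{ij}$ off the diagonal.
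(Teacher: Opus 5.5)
Your route is the paper's: evaluate each relation on the tensors $f^{(a)}_{ij}\otimes f^{(b)}_{k\ell}$ (or on $1_B$), then absorb the nonzero scalars into the $T^{(a)}_{ij}$. Your (LQCK1) computation is correct. There is, however, a concrete error in the (LQCK3) step, and the repair you propose would not work.

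Your formula $m^*(1_B)=\sum_{a}\sum_{i,j}\psi(e^{(a)}_{jj})f^{(a)}_{ij}\otimes f^{(a)}_{ji}$ puts the weight on the wrong index. It does not follow from your ingredient~1, and it even fails $mm^*(1_B)=\delta^2 1_B$ unless $\psi$ is tracial. Since $f^{(a)}_{ii}=\psi(e^{(a)}_{ii})^{-1}e^{(a)}_{ii}$, one has $1_B=\sum_{a,i}\psi(e^{(a)}_{ii})f^{(a)}_{ii}$. Then $m^*(f^{(a)}_{ii})=\sum_j f^{(a)}_{ij}\otimes f^{(a)}_{ji}$ gives
\[
m^*(1_B)=\sum_{a=1}^d\sum_{i,j=1}^{n_a}\psi(e^{(a)}_{ii})\,f^{(a)}_{ij}\otimes f^{(a)}_{ji},
\]
hence
\[
\mu(S\otimes S^*)m^*(1_B)=\sum_{a=1}^d\sum_{i,j=1}^{n_a}\psi(e^{(a)}_{ii})\,S(f^{(a)}_{ij})S(f^{(a)}_{ij})^*=\delta^{-2}\sum_{a=1}^d\sum_{i,j=1}^{n_a}T^{(a)}_{ij}(T^{(a)}_{ij})^*.
\]
So (LQCK3) is equivalent to \eqref{eq:LQCK3} for every faithful $\delta$-form. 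With your weights, no regrouping can produce this, because the summands $S(f^{(a)}_{ij})S(f^{(a)}_{ij})^*$ have no symmetry in $(i,j)$. Retreating to tracial $\psi$ would prove only a special case, whereas the proposition concerns general faithful $\delta$-forms, and Theorem~\ref{thm:unital_embedding_of_O_n} uses it for arbitrary $\psi$.

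Your fallback for (LQCK2) is likewise unnecessary, since the bookkeeping closes for arbitrary $\psi$. Following your steps on $f^{(a)}_{ji}\otimes f^{(b)}_{k\ell}$ gives
\[
S(f^{(a)}_{ij})^*S(f^{(b)}_{k\ell})=\delta_{a=b}\delta_{i=k}\delta_{j=\ell}\,\frac{\psi(e^{(a)}_{jj})}{\psi(e^{(a)}_{ii})}\sum_{m=1}^{n_a}S(f^{(a)}_{jm})S(f^{(a)}_{jm})^*.
\]
Multiply by $\delta^2\psi(e^{(a)}_{ii})^{1/2}\psi(e^{(b)}_{kk})^{1/2}$, which equals $\delta^2\psi(e^{(a)}_{ii})$ wherever the Kronecker deltas are nonzero. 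This yields $(T^{(a)}_{ij})^*T^{(b)}_{k\ell}=\delta_{a=b}\delta_{i=k}\delta_{j=\ell}\sum_{m} T^{(a)}_{jm}(T^{(a)}_{jm})^*$. After swapping $i\leftrightarrow j$ this is \eqref{eq:LQCK2}, and it is exactly the paper's computation. With the $m^*(1_B)$ formula corrected, your argument is complete. Both directions of the equivalence then follow as you say, since all rescalings are by nonzero scalars.
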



\begin{proof}
Equations \ref{eq:LQCK1} and \ref{eq:LQCK3} are straightforward to verify. 
   For $f^{(a)}_{ij}\otimes f^{(b)}_{k\ell}\in B\otimes B$, observe:
    \begin{align*}
        (S^{(a)}_{ji})^* S^{(b)}_{k\ell} 
        &=\mu  (S^*\otimes S)(f^{(a)}_{ij}\otimes f_{k\ell}^{(b)})\\
         &= \displaystyle \delta^{-2} [\mu  (S\otimes S^*)  m^*  D  m](f^{(a)}_{ij}\otimes f_{k\ell}^{(b)}) \tag*{(\ref{eq:lqck2})}\\
        &= \delta_{a=b}\delta_{j=k}\left(\displaystyle \delta^{-2}\,  \,[\mu  (S\otimes S^*)  m^*  D](\psi(e_{jj}^{(a)})^{-1}f^{(a)}_{i\ell})\right)\\
        &= \delta_{a=b}\delta_{\stackrel{j=k}{i=\ell}} \left(\displaystyle \frac{1}{\delta^2 \psi(e_{jj}^{(a)})}\, \,[\mu  (S\otimes S^*)  m^*]\left(\delta^2 \psi(e_{ii}^{(a)})f_{ii}^{(a)}\right)\right)\\
       &=\delta_{a=b} \delta_{\stackrel{j=k}{i=\ell}}  \left(\displaystyle \frac{\psi(e_{ii}^{(a)})}{\psi(e_{jj}^{(a)})} \,[\mu  (S\otimes S^*)  m^*](f^{(a)}_{ii}) \right) \\
        & = \delta_{a=b} \delta_{\stackrel{j=k}{i=\ell}}\left(\displaystyle \frac{\psi(e_{ii}^{(a)})}{\psi(e_{jj}^{(a)})} \,\sum_{p=1}^{n_a}S^{(a)}_{ip}(S^{(a)}_{ip})^*\right).
    \end{align*}
    Therefore,
    \begin{align*}
        (T_{ji}^{(a)})^* T_{k\ell}^{(b)} 
   & = \delta^2 \psi(e_{jj}^{(a)}) \delta_{a=b} \delta_{\stackrel{j=k}{i=\ell}}\left(\displaystyle \frac{\psi(e_{ii}^{(a)})}{\psi(e_{jj}^{(a)})} \,\sum_{m=1}^{n_a}S^{(a)}_{im}(S^{(a)}_{im})^*\right)\\
&    =
    \delta_{a=b} \delta_{\stackrel{j=k}{i=\ell}}\left(\displaystyle \,\sum_{m=1}^{n_a}T^{(a)}_{im}(T^{(a)}_{im})^*\right).
    \end{align*}
\end{proof}


We arrive at our two main results of this section. For a general dephasing graph $(B,\psi,D)$, the identification of Cuntz isometries we use in the present paper requires that $B$ is isomorphic to a direct sum of copies of the same dimension matrix algebra, i.e., $B \cong \oplus_{a=1}^d M_n$, where $n$ is fixed for all $1\leq a\leq d$. In this case, we obtain a unital embedding of $\cO_n$ inside $L\cO(B,\psi,D)$, independent of choice of $d$ or $\psi$.


\begin{theorem}\label{thm:unital_embedding_of_O_n}
     Suppose $B\cong \oplus_{a=1}^d M_n$ for some fixed $n\in \mathbb{N}$, and let $s:B\to L\cO(B,\psi,D)$ denote the universal local quantum Cuntz--Krieger $(B,\psi,D)$-family. For each $1\leq a\leq d$ and $1\leq i,j \leq n_a$, set 
     \[
     t^{(a)}_{ij}
     :=
     \delta\psi(e_{ii}^{(a)})^{1/2}\cdot s(f_{ij}^{(a)}).
     \] 
     and 
      \[
     \forall 1\leq i\leq n: \quad
     V_i
     :=\sum_{a=1}^d\sum_{j=1}^n t^{(a)}_{ij}.
     \] 
     Then the collection $\{V_i:1\leq i\leq n\}$ 
    consists of Cuntz isometries which generate a unital embedding of $\cO_n$ inside $L\cO(B,\psi,D).$
\end{theorem}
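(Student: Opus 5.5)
The plan is to show that the $V_i$ satisfy the Cuntz relations
\[
V_i^*V_k=\delta_{i=k}1
\quad\text{and}\quad
\sum_{i=1}^n V_iV_i^*=1.
\]
The universal property of $\cO_n$ then gives a unital $*$-homomorphism $\cO_n\to L\cO(B,\psi,D)$ sending the canonical generators to the $V_i$. For $n\geq 2$ this map is injective because $\cO_n$ is simple, which yields the embedding. Throughout I use Proposition~\ref{prop:LQCK_relations_for_dephasing} for the universal family $s$, so the $t^{(a)}_{ij}$ satisfy \eqref{eq:LQCK1}--\eqref{eq:LQCK3}. The hypothesis $n_a=n$ for all $a$ is exactly what makes the sum over $j\in\{1,\dots,n\}$ in $V_i$ meaningful for every summand $a$ simultaneously.

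\emph{Isometry relations.} Expanding,
\[
V_i^*V_k=\sum_{a,b}\sum_{j,\ell}(t^{(a)}_{ij})^*t^{(b)}_{k\ell}.
\]
By \eqref{eq:LQCK2}, with the roles of the indices relabelled, each term vanishes unless $a=b$, $i=k$ and $j=\ell$. In that case the term equals $\sum_m t^{(a)}_{jm}(t^{(a)}_{jm})^*$. Hence
\[
V_i^*V_k=\delta_{i=k}\sum_a\sum_{j,m}t^{(a)}_{jm}(t^{(a)}_{jm})^*=\delta_{i=k}1
\]
by \eqref{eq:LQCK3}.

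\emph{Range relation.} Expanding,
\[
\sum_iV_iV_i^*=\sum_i\sum_{a,b}\sum_{j,\ell}t^{(a)}_{ij}(t^{(b)}_{i\ell})^*.
\]
The key step, and the main obstacle, is to kill the cross terms. Here I invoke part (3) of Theorem~\ref{thm:lqck_structure_theorem}. If $s(f^{(a)}_{ij})s(f^{(b)}_{i\ell})^*\neq0$, then $f^{(a)}_{ij}\cdot\epsilon_D\cdot D(f^{(b)}_{\ell m})\neq0$ for some $m$. Since $D(f^{(b)}_{\ell m})$ is zero unless $m=\ell$, and is a nonzero multiple of $f^{(b)}_{\ell\ell}$ when $m=\ell$, this forces $f^{(a)}_{ij}\cdot\epsilon_D\cdot f^{(b)}_{\ell\ell}\neq0$. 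By \eqref{eq:cyclic_generator_dephasing} this gives $a=b$ and $j=\ell$. The $t$'s are nonzero scalar multiples of the $s(f)$'s, so only the diagonal terms survive:
\[
\sum_iV_iV_i^*=\sum_a\sum_{i,j}t^{(a)}_{ij}(t^{(a)}_{ij})^*=1,
\]
again by \eqref{eq:LQCK3}.

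The remaining care is bookkeeping: matching the index conventions of \eqref{eq:LQCK2} to the product $(t^{(a)}_{ij})^*t^{(b)}_{k\ell}$, and checking that applying part (3) to $s$ rather than to $t$ is harmless since the two differ only by nonzero scalars. The degenerate case $n=1$ should be set aside, or treated separately, since simplicity of $\cO_n$ is used only for $n\geq2$.
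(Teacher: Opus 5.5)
Your proposal is correct and follows the paper's proof: \eqref{eq:LQCK2} and \eqref{eq:LQCK3} give $V_i^*V_k=\delta_{i=k}\mathbf{1}$, part (3) of Theorem~\ref{thm:lqck_structure_theorem} together with \eqref{eq:cyclic_generator_dephasing} kills the cross terms $t^{(a)}_{ij}(t^{(b)}_{i\ell})^*$ in $\sum_i V_iV_i^*$, and simplicity of $\cO_n$ gives injectivity. Two small points sharpen the paper's version without changing the route: you discard the $j\neq\ell$ terms in $V_i^*V_k$ directly via \eqref{eq:LQCK2}, which is the precise justification (the paper cites the structure theorem there, but its part (1) only matches row indices), and you correctly flag that $n=1$ needs separate treatment.
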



\begin{proof}
    Suppose $B\cong \oplus_{a=1}^d M_n$ and let $s:B\to L\cO(B,\psi,D)$ be the universal local quantum Cuntz--Krieger family with unit ${\bf 1}:={\bf 1}_{L\cO(B,\psi,D)}$. Fix $1\leq a,b\leq d$ and $1\leq i,j\leq n_a$, $1\leq k,\ell\leq b$. We compute
    \[
        \left(\sum_{a=1}^d\sum_{j=1}^n (t^{(a)}_{ij})^*\right)\left(\sum_{b=1}^d \sum_{k=1}^n t^{(b)}_{ik} \right)
        \overset{(\ref{thm:lqck_structure_theorem})}{=}
        \sum_{a=1}^d \sum_{j=1}^n (t^{(a)}_{ij})^*t^{(a)}_{ij}  
        \overset{(\ref{eq:LQCK2})}{=}
        \sum_{a=1}^d \sum_{j,p=1}^n t^{(a)}_{jp}(t^{(a)}_{jp})^* 
        \overset{(\ref{eq:LQCK3})}{=}
        {\bf 1}.
    \]
   Therefore, $V_i^*V_i = {\bf 1}$ for each $1\leq i\leq n$. Note $V_i^*V_\ell = 0$ whenever $i\neq \ell$ by Equation~\ref{eq:LQCK2} and the definition of the $V_i$, so the ranges of $V_iV_i^*$ and $V_\ell V_\ell^*$ are orthogonal. 
   
   Next we show $\sum_{i=1}^n V_iV_i={\bf 1}.$ By definition of $D$ and Equation~\ref{eq:cyclic_generator_dephasing}, for all $1\leq m\leq n_b,$
   \[
   f_{ij}^{(a)}\cdot \epsilon_D \cdot D(f_{\ell m}^{(a)})
   =
   f_{ij}^{(a)}\cdot \epsilon_D \cdot \delta_{\ell = m} \psi(e_{\ell \ell}^{(b)}) \delta^2 f_{\ell \ell}^{(b)}
   =
   \delta_{\stackrel{a=b}{j=\ell} }(\psi(e_{jj}^{(a)}) \delta ^2(f_{ij}^{(a)}\cdot \epsilon_D \cdot f_{jj}^{(a)})).
   \]
   Hence, Theorem~\ref{thm:lqck_structure_theorem} implies that the terms $t_{ij}^{(a)}(t_{ik}^{(b)})^*$ appearing in $V_iV_i^*$ must equal $0$ unless $a=b$ and $j=k$. This yields
    \[
        \sum\limits_{i=1}^n
        V_iV_i^*
        =
        \sum\limits_{a,b=1}^d\sum\limits_{i,j,k=1}^n t^{(a)}_{ij}  (t^{(b)}_{ik})^* 
        =
        \sum\limits_{a=1}^d\sum\limits_{i,j=1}^n t^{(a)}_{ij}  (t^{(a)}_{ij})^* 
        \overset{(\ref{eq:LQCK3})}{=}{\bf 1}.
    \]
Therefore, $\{V_i:1\leq i\leq n\}$ consists of Cuntz isometries, and by simplicity of $\cO_n$, we have that $C^*(\{V_i:1\leq i\leq n\})$ is isomorphic to $\cO_n$. We conclude that $\cO_n$ embeds unitally into $L\cO(B,\psi,D)$ whenever $B\cong \oplus_{a=1}^d M_n.$ 
\end{proof}



The unital embedding of $\cO_n$ inside $L\cO(\oplus_{a=1}^d M_n,\psi,D)$ could provide insight to the isomorphism class of $L\cO(\oplus_{a=1}^d M_n,\psi,D)$ if a generalization of Theorem~\ref{thm:morita_equivalence}, as the following, were to hold: {\em Let $B\cong \oplus_{a=1}^d M_n$. Given a cp map $A:B\to B$, is the associated Cuntz--Pimsner algebra $\cO_{B\otimes_A B}$, as in \cite{Marrero-Muhly05}, Morita equivalent to the Cuntz algebra on $r(A)$ generators?} If true, we could conclude that for an arbitrary finite quantum graph $(B,\psi,A)$ such that $A$ is cp and the kernel of $A$ contains no central summand of $B$, the local quantum Cuntz--Krieger algebra $L\cO(B,\psi,A)$ is stably isomorphic to the Cuntz algebra on $r(A)$ generators. In particular, this would imply that $L\cO(\oplus_{a=1}^d M_n,\psi,D)$ is stably isomorphic to $\cO_{dn}$, the Cuntz algebra on $dn$ generators.

We conclude with the main application of Theorem~\ref{thm:lqck_structure_theorem}.

\begin{corollary}\label{cor:cuntz_isometries_for_L(M_n-tr-D}
    Let $s:M_n\to L\cO(M_n,\tr,D)$ be the universal local quantum Cuntz--Krieger $(M_n,\tr,D)$-family. For each $1\leq i,j \leq n$, set \[t_{ij}:=\sqrt{n}\cdot s(f_{ij}).\] Then the collection $\{V_i:1\leq i\leq n\}$ where 
     \[
     \forall 1\leq i\leq n: \quad
     V_i
     :=\sum_{j=1}^n t_{ij}
     \] consists of Cuntz isometries which generate all of $L\cO(M_n,\tr,D).$
\end{corollary}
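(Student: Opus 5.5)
The plan is to specialize Theorem~\ref{thm:unital_embedding_of_O_n} to $d=1$ and then show that the $V_i$ generate everything. With $B=M_n$ and $\psi=\tr$ we have $\delta^2=n^2$ and $\psi(e_{ii})=1/n$. Hence $\delta\psi(e_{ii})^{1/2}=n\cdot n^{-1/2}=\sqrt{n}$, so the $t_{ij}$ of the statement agree with the $t^{(1)}_{ij}$ of Theorem~\ref{thm:unital_embedding_of_O_n}. That theorem then shows that $\{V_i\}$ is a family of Cuntz isometries generating a unital copy of $\cO_n$. What remains is to show $C^*(V_1,\dots,V_n)=L\cO(M_n,\tr,D)$. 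Since $L\cO$ is generated by $\{s(f_{ij})\}=\{n^{-1/2}t_{ij}\}$, it suffices to recover each $t_{ij}$ from the $V$'s.

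To do this I would use the orthogonality relations from Theorem~\ref{thm:lqck_structure_theorem} together with Proposition~\ref{prop:LQCK_relations_for_dephasing} with $d=1$. Set
\[
P_j:=\sum_{m=1}^n t_{jm}t_{jm}^*.
\]
Relation \eqref{eq:LQCK2} gives $t_{ji}^*t_{k\ell}=\delta_{j=k}\delta_{i=\ell}P_i$, so each $t_{k\ell}$ is a partial isometry with source projection $P_\ell$. The proof of Theorem~\ref{thm:unital_embedding_of_O_n} shows $t_{ij}t_{ik}^*=0$ for $j\neq k$, which is property (3) combined with \eqref{eq:cyclic_generator_dephasing}. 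Consequently $V_iV_i^*=\sum_j t_{ij}t_{ij}^*$. The goal is to isolate $t_{ij}$ by multiplying $V_i$ on the right by a projection that lies in $C^*(V)$ and picks out the column index $j$.

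The candidate is $V_jV_j^*=\sum_m t_{jm}t_{jm}^*=P_j$. By property (2) of Theorem~\ref{thm:lqck_structure_theorem} and \eqref{eq:cyclic_generator_dephasing}, $t_{ik}t_{jm}=0$ unless $k=j$. I expect
\[
V_iV_jV_j^*=\sum_{k,m}t_{ik}t_{jm}t_{jm}^*=\sum_m t_{ij}t_{jm}t_{jm}^*=t_{ij}P_j=t_{ij},
\]
where the last equality holds because $P_j$ is the source projection of $t_{ij}$. That $P_j$ is the source projection follows from \eqref{eq:LQCK2}: $t_{ij}^*t_{ij}=P_j$, so $t_{ij}P_j=t_{ij}t_{ij}^*t_{ij}=t_{ij}$. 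Alternatively one can read this off directly from \eqref{eq:LQCK1}, which with $d=1$ and $k=j$ says $\sum_m t_{jm}t_{jm}^*t_{j\ell}=t_{j\ell}$; but the middle index differs from what is needed here, so I would rely on the partial isometry argument. Either way, $t_{ij}=V_iV_jV_j^*\in C^*(V_1,\dots,V_n)$, and hence the $V_i$ generate all of $L\cO(M_n,\tr,D)$.

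The main obstacle is bookkeeping the index conventions. I need to make sure that $V_jV_j^*$ really equals $P_j$ with the row index $j$, and that property (2) kills the cross terms $t_{ik}t_{jm}$ with $k\neq j$. Both hinge on \eqref{eq:cyclic_generator_dephasing} applied to the correct pair of matrix units, namely $f_{ik}\cdot\epsilon_D\cdot f_{jm}\neq0\Rightarrow k=j$. Once $L\cO(M_n,\tr,D)=C^*(V_i)\cong\cO_n$ is established, the isomorphism claimed in the introduction follows, using the simplicity of $\cO_n$ as in Theorem~\ref{thm:unital_embedding_of_O_n}.
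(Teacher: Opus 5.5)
Your argument is essentially the paper's: specialize Theorem~\ref{thm:unital_embedding_of_O_n}, then use property (2) of Theorem~\ref{thm:lqck_structure_theorem} to show $t_{ij}=V_iV_jV_j^*$. The one difference is how you justify $t_{ij}V_jV_j^*=t_{ij}$, and that step is incomplete as written.

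The relation $t_{ij}^*t_{ij}=P_j$ does not by itself make $t_{ij}$ a partial isometry. You also need $P_j$ to be a projection, and you never say why it is. This is a one-line fix with what you already have. You showed $P_j=V_jV_j^*$, and $V_j$ is an isometry, so $P_j$ is a projection. Then $(t_{ij}-t_{ij}P_j)^*(t_{ij}-t_{ij}P_j)=P_j-P_j-P_j+P_j=0$.

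The route you discarded also works if you do not set $i=j$. Relation \eqref{eq:LQCK1} with $k=j$ and $i$ free reads $\sum_m t_{im}t_{jm}^*t_{j\ell}=t_{i\ell}$. Relation \eqref{eq:LQCK2} collapses the left side to $t_{i\ell}P_\ell$, which gives exactly $t_{i\ell}P_\ell=t_{i\ell}$.

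The paper avoids the issue altogether. It writes $t_{ij}=t_{ij}\mathbf{1}=\sum_k t_{ij}V_kV_k^*=t_{ij}V_jV_j^*=V_iV_jV_j^*$, using only the Cuntz relation and property (2). It never needs to know that the $t_{ij}$ are partial isometries.
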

    
\begin{proof}
Note that $\sqrt{n} = \delta \psi(e_{ii})^{1/2}$ when $\psi=\tr$. Hence, Theorem~\ref{thm:unital_embedding_of_O_n} confirms $\{V_i:1\leq i \leq n\}$ are Cuntz isometries, so it remains to show that $C^*(\{V_i:1\leq i \leq n\})=L\cO(M_n,\tr,D).$ For any $1\leq i,j,k,\ell\leq n$, as $t_{ij}t_{k\ell}\neq 0\iff 
S_{ij}S_{k\ell} \neq 0$, Theorem~\ref{thm:lqck_structure_theorem} yields
\[
t_{ij}t_{k\ell}\neq 0 
\quad
\Longrightarrow 
\quad
f_{ij}\cdot \epsilon_D \cdot f_{k\ell}\neq 0 
\quad
{\Longrightarrow}
\quad
j=k,
\]
which tells us $t_{ik}V_j = \sum_{\ell=1}^d t_{ik}t_{j\ell} = 0$ unless $j=k.$ Hence,
\[V_iV_j
=\sum_{k,\ell=1}^n t_{ik}t_{j\ell}
=\sum_{\ell=1}^n t_{ij}t_{j\ell}
=t_{ij}\left(\sum_{\ell=1}^n t_{j\ell}\right)
=t_{ij}V_j.
\]
Using 
$
{\bf 1}
=
\sum_{k=1}^d V_kV_k^*
$
from Theorem~\ref{thm:unital_embedding_of_O_n}, we have
\[
    t_{ij}
    =t_{ij}{\bf 1}
    =t_{ij}\left(\sum_{k=1}^nV_kV_k^*\right)
    =\sum_{k=1}^n t_{ij}V_kV_k^*
    =t_{ij}V_jV_j^*
    =V_iV_jV_j^*.
\] 
Therefore, all of the $t_{ij}$ belong to $C^*(\{V_i:1\leq i\leq n\})$, and because the $t_{ij}$ generate $L\cO(M_n,\tr,D)$, we obtain
$L\cO(M_n,\tr,D)
\cong \cO_n$.%
\end{proof}

\section*{Acknowledgements}
We wish to thank M. Brannan, M. Hamidi, B. Nelson, and M. Wasilewski for their comments on a near-final draft of the paper and for helpful discussion at the Institut-Mittag Leffler Workshop on Operator Algebras and Quantum Information. We wish to thank C. Schafhauser for discussing future directions of applying classification tools to distinguish local and full quantum Cuntz--Krieger algebras, as well as J. Crann, L. Gao, I. Todorov, and L. Turowska for discussion which motivated the calculation of the local quantum Cuntz--Krieger algebra for the dephasing graph at the American Institute for Mathematics. We also wish to thank the Institut Mittag-Leffler and the American Institute of Mathematics. The author was supported by National Science Foundation Division of Mathematical Sciences grant 2603107.

\bibliographystyle{alpha}
\bibliography{final_arxiv_1}
\end{document}